\newcommand\FF{{\mathcal F}}
\newcommand\LL{{\mathcal L}}
\newcommand\MM{{\mathcal M}}
\newcommand\PP{{\mathcal P}}
\newcommand\PMF{{\PP\kern-2pt\MM\FF}}
\newcommand\PML{{\PP\kern-2pt\MM\LL}}
\newcommand\R{{\mathbb R}}
\renewcommand\P{{\mathbb P}}
\newcommand{\fsubd}{\mathrel{{\scriptstyle\searrow}\kern-1ex^d\kern0.5ex}}
\newcommand{\bsubd}{\mathrel{{\scriptstyle\swarrow}\kern-1.6ex^d\kern0.8ex}}
\newcommand{\fsubeq}{\mathrel{\raise-.7ex\hbox{$\overset{\searrow}{=}$}}}
\newcommand{\bsubeq}{\mathrel{\raise-.7ex\hbox{$\overset{\swarrow}{=}$}}}
\newcommand{\tsh}[1]{\left\{\kern-.9ex\left\{#1\right\}\kern-.9ex\right\}}
\newtheorem{thm}{Theorem}[section]
\newtheorem{cor}[thm]{Corollary}
\theoremstyle{definition}
\newtheorem{dfn}[thm]{Definition}
\newtheorem{rem}[thm]{Remark}
\newtheorem{rems}[thm]{Remarks}
\newtheorem{claim}[thm]{Claim}
\newtheorem{question}[thm]{Question}
\newtheorem{case}{Case}
\DeclareMathOperator{\interior}{int}
\newsavebox{\commentbox}
\newenvironment{mycomment}%
{\ifthenelse{\equal{\showcomments}{yes}}%
{\footnotemark
        \begin{lrbox}{\commentbox}
        \begin{minipage}[t]{1.25in}\raggedright\sffamily\tiny
        \footnotemark[\arabic{footnote}]}
{\begin{lrbox}{\commentbox}}}
{\ifthenelse{\equal{\showcomments}{yes}}
{\end{minipage}\end{lrbox}\marginpar{\usebox{\commentbox}}}
{\end{lrbox}}}
\newcommand{\showcomments}{yes}
\begin{document}

\title[Title goes here]{$\beta$-Uniform Convexity and Divisible Domains}
\author[A. Pompilio]{Amelia Pompilio}
\address{Department of Mathematics, Statistics, and Computer Science \\ University of Illinois at Chicago \\ Chicago, IL, USA}

\date{\today}

\begin{abstract}

Divisible convex sets have long been important in the study of Hilbert geometries. When a divisible convex set is an ellipsoid, the Hilbert geometry it induces is the hyperbolic space. In general, strictly convex divisible domains exhibit negative curvature properties, but only the ellipsoid is a CAT(0) space. The notion of $p$-uniform convexity from the theory of Banach spaces has been proposed as a generalization of the Alexandrov-Toponogov comparison theorems to Finsler manifolds. We prove that a natural Finsler metric on a strictly convex divisible domain is $\beta$-uniformly convex, where the exact constant $\beta$ is related to the regularity of the boundary.

\end{abstract}

\maketitle

\setcounter{tocdepth}{1}
\numberwithin{equation}{section}
\tableofcontents

\section{Introduction}
\label{sec:intro}

Divisible convex domains $\Omega \in \R\P^n$ have historically been objects of interest due to their rich connections to multiple areas of geometry. They form the most interesting examples of Hilbert geometries and have received thorough surveying as in \cite{Benoist} and \cite{Marquis}.

Kelly and Strauss proved that if a Hilbert geometry has determinate Busemann curvature, then it must be an ellipsoid and hence hyperbolic \cite{KS1}. Further, as Hilbert geometries are Finsler manifolds, they are CAT($\kappa$) only when $\Omega$ is an ellipsoid \cite{Ohta}. For non-ellipsoid strictly convex divisible domains, there are still hints of negative curvature; for example, the geodesic flow of the Hilbert metric on the quotient $\Gamma \backslash \Omega$ is Anosov (see Theorem \ref{thm:anosov}). To understand this behavior, we study the notion of $p $-uniform convexity, a decay condition on the modulus of convexity (see Definition \ref{def:puc}). In \cite{Ohta}, Ohta proposes this notion as a generalization of the CAT(0) condition.

For $x_0 \in \Omega$, let $M_\Omega(x_0, \cdot)$ be the Minkowski functional (see Definition \ref{dfn:minkowski}). Now we can state our main result:

\begin{thm}
\label{thm:mainminkowski}
Let $\Omega \subset \mathbb{RP}^n$ be a strictly convex divisible domain. Then for any $x_0 \in \Omega,$ the space $\big(\Omega, M(x_0, \cdot)\big)$ is $\beta$-uniformly convex for some $\beta \in [2, \infty)$.
\end{thm}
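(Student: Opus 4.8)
The plan is to reduce Theorem~\ref{thm:mainminkowski} to a uniform quantitative convexity property of $\partial\Omega$, and then to feed in the boundary regularity theory for divisible domains. Since a strictly convex divisible $\Omega$ is properly convex, one first fixes an affine chart in which $\overline{\Omega}$ is a compact convex body. Unwinding Definition~\ref{def:puc}, $\beta$-uniform convexity of $\big(\Omega, M(x_0,\cdot)\big)$ amounts to a power-type lower bound on the modulus of convexity of the Minkowski functional $M(x_0,\cdot)$: there is $c>0$ with
\[
1-\Big\|\tfrac{u+v}{2}\Big\|\;\ge\;c\,\|u-v\|^{\beta}
\qquad\text{whenever }\|u\|=\|v\|=1,
\]
where $\|\cdot\|$ denotes $M(x_0,\cdot)$, whose unit ball is $\Omega$ recentred at $x_0$; geometrically, this says $\Omega$ is \emph{$\beta$-convex}, i.e. midpoints of chords of length $\epsilon$ lie at depth $\gtrsim\epsilon^{\beta}$ from $\partial\Omega$. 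Because $M(x_0,\cdot)$ is bi-Lipschitz to any background Euclidean norm on the compact set $\overline{\Omega}$, and any two affine charts are mutually bi-Lipschitz on $\overline{\Omega}$, this property is independent of the chart and of $x_0$, and is equivalent to Euclidean $\beta$-convexity of $\Omega$ (same exponent $\beta$, different constant). So it suffices to produce a finite $\beta\ge 2$ for which $\Omega$ is $\beta$-convex.

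Next I would record that $\beta\ge 2$ is forced: by Nordlander's inequality the modulus of convexity of any normed space is at most that of a Hilbert space, which is $\sim\epsilon^{2}/8$, so no exponent below $2$ can occur. The real content is that \emph{some} finite exponent works, and this genuinely uses more than strict convexity: the $C^{\infty}$, strictly convex arc which near a boundary point is the graph of $t\mapsto e^{-1/|t|}$ has chords of length $\epsilon$ whose midpoints sit at depth $\sim e^{-2/\epsilon}$, beating every polynomial. The missing ingredient is divisibility, which is precisely what makes $\Omega$ \emph{uniformly} convex. Here I would invoke the boundary regularity theory of strictly convex divisible domains: by a theorem going back to Benoist (\cite{Benoist}; see also the survey \cite{Marquis}), such an $\Omega$ is $\beta$-convex for some finite $\beta\in[2,\infty)$ --- equivalently, $\partial\Omega$ and $\partial\Omega^{*}$ are of class $C^{1+\alpha}$ for some $\alpha>0$, with $\beta$ determined by $\alpha$ (this is the step where cocompactness of the $\Gamma$-action is used, upgrading pointwise strict convexity to a uniform estimate by a fundamental-domain/compactness argument, and where $\beta$-convexity and $C^{1+\alpha}$-smoothness are interchanged via polar duality of gauge functions).

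Assembling the pieces: Benoist's theorem gives a finite $\beta\ge 2$ with $\Omega$ Euclidean $\beta$-convex; the bi-Lipschitz comparison transports this to $\beta$-convexity of $\Omega$ in the norm $M(x_0,\cdot)$, i.e. to a power-type $\beta$ bound on the modulus of convexity of $\big(\Omega, M(x_0,\cdot)\big)$, which by Definition~\ref{def:puc} is exactly $\beta$-uniform convexity. The step needing the most care is the dictionary between the metric formulation of $\beta$-uniform convexity in Definition~\ref{def:puc} (geodesic triangles, in the spirit of \cite{Ohta}) and the classical modulus-of-convexity formulation used above; in the present flat Finsler picture geodesics are affine segments and midpoints are midpoints, so this is a standard but not wholly automatic translation (one invokes the Ball--Carlen--Lieb style passage from a power-type modulus of convexity to the $\beta$-uniform convexity inequality). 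One should also note that $M(x_0,\cdot)$ need not be symmetric when $\Omega$ is not centrally symmetric about $x_0$; this causes no difficulty, since every estimate above involves only midpoints of chords of the convex body $\Omega$, for which ordinary convexity suffices.
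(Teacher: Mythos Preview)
Your strategy is sound and shares its decisive input with the paper: both arguments feed in Guichard/Benoist $\beta$-convexity of $\partial\Omega$ (Definition~\ref{dfn:beta}) and use that $M(x_0,\cdot)$ is bi-Lipschitz to the Euclidean norm on the bounded body $\overline{\Omega}$. The execution, however, is genuinely different. The paper fixes the radial boundary point $\xi$ in the direction of $\tfrac{x+y}{2}$, applies the affine normalisations of Section~\ref{sec:setup}, and then splits into three cases: a degenerate case $\tfrac{x+y}{2}=x_0$, a ``small angle'' case $\theta<\pi/4$ where the graph bound $f(t)=C_K|t|^{\beta}-h$ coming from \eqref{dfn:betafunction} is inserted below $\partial\Omega$ and the midpoint depth is computed explicitly, and a ``large angle'' case $\theta\ge\pi/4$ handled by a piecewise linear barrier. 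Your route sidesteps both the coordinate set-up and the trichotomy by arguing directly at the level of convex bodies; the gain is brevity, the cost is that one non-automatic lemma gets absorbed into a citation.

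That lemma is the one place you should be more careful. What Benoist/Guichard actually give you is the tangent-plane estimate $d(p,T_q\partial\Omega)\ge C\,d(p,q)^{\beta}$ for $p,q\in\partial\Omega$, not the midpoint-depth estimate $d\big(\tfrac{x+y}{2},\partial\Omega\big)\gtrsim\|x-y\|^{\beta}$ that you identify with $\beta$-uniform convexity. The implication is short---take $\xi\in\partial\Omega$ the nearest point to $m=\tfrac{x+y}{2}$, so $d(m,\partial\Omega)=d(m,T_\xi\partial\Omega)=\tfrac12\big(d(x,T_\xi)+d(y,T_\xi)\big)\ge \tfrac{C}{2}\big(d(x,\xi)^{\beta}+d(y,\xi)^{\beta}\big)\ge C\,2^{-\beta}\|x-y\|^{\beta}$---but it is exactly the content the paper's Case~2 computation is supplying, and you should state it rather than fold it into ``$\Omega$ is $\beta$-convex''. (Choosing $\xi$ as the nearest point rather than the radial point is what lets you avoid the paper's angle dichotomy.) Two smaller remarks: Definition~\ref{def:puc} in this paper \emph{is} the classical modulus-of-convexity condition, not Ohta's geodesic-triangle inequality, so no Ball--Carlen--Lieb translation is needed here; and Nordlander's inequality is a nice aside but is not used---the bound $\beta\ge2$ already comes with Guichard's definition.
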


There is a natural Finsler metric $F_\Omega(x,\cdot)$ on $\Omega$ that is the symmetrization of the Minkowski functional $M_\Omega(x_0, \cdot)$, see Definition \ref{def:finsler}. Due to this fact we get the following corollary of Theorem \ref{thm:mainminkowski}:

\begin{cor}\label{cor:mainfinsler}
    Let $\Omega \subset \mathbb{RP}^n$ be a strictly convex divisible domain. Then the space $\big(\Omega, F_\Omega(x, \cdot)\big)$ is $\beta$-uniformly convex for some $\beta \in [2, \infty)$.
\end{cor}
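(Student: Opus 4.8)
The plan is to derive Corollary~\ref{cor:mainfinsler} from Theorem~\ref{thm:mainminkowski} through the symmetrization relationship. Recall (Definition~\ref{def:finsler}) that for $x\in\Omega$,
\[
F_\Omega(x,v)=\tfrac12\bigl(M_\Omega(x,v)+M_\Omega(x,-v)\bigr),
\]
so the Finsler norm at $x$ is the arithmetic mean of $M_\Omega(x,\cdot)$ and the reflected functional $v\mapsto M_\Omega(x,-v)$. The first point is that this reflected functional is again of the form covered by the main theorem: in an affine chart containing $\overline{\Omega}$ it is the Minkowski functional $M_{\Omega'}(x,\cdot)$ of the reflected domain $\Omega':=2x-\Omega$, and $\Omega'$ is again a strictly convex divisible domain, since the reflection $\sigma\colon y\mapsto 2x-y$ is (the restriction of) a projective transformation and $\sigma\Gamma\sigma^{-1}$ divides $\Omega'$ whenever $\Gamma$ divides $\Omega$. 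Applying Theorem~\ref{thm:mainminkowski} to $\Omega$ and to $\Omega'$ and passing to the larger of the two exponents, we obtain a single $\beta\in[2,\infty)$ for which $M_\Omega(x,\cdot)$ and $M_\Omega(x,-\cdot)$ are both $\beta$-uniformly convex. It therefore suffices to prove the following stability statement: if $N_1,N_2$ are $\beta$-uniformly convex (Minkowski) norms on $\mathbb{R}^n$, then so is $N:=\tfrac12(N_1+N_2)$.

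I would prove this through the modulus of convexity, using a ``sagitta'' decomposition. For $u,v$ with $N(u)=N(v)=1$, write $g_i:=\tfrac12\bigl(N_i(u)+N_i(v)\bigr)-N_i\!\left(\tfrac{u+v}{2}\right)\ge 0$ for the depth to which the midpoint of the chord $[u,v]$ sinks inside the unit ball $B_i$ of $N_i$; a direct computation gives
\[
1-N\!\left(\tfrac{u+v}{2}\right)=\tfrac12\,(g_1+g_2),
\]
so that $\delta_N(\epsilon)=\inf\bigl\{\tfrac12(g_1+g_2)\bigr\}$ over $N$-unit $u,v$ with $N(u-v)\ge\epsilon$, and it is enough to bound $g_1+g_2\gtrsim\epsilon^{\beta}$. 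For $\epsilon$ bounded away from $0$ this is automatic: $N$ inherits strict convexity from $N_1,N_2$, so $\delta_N>0$ on compact subsets of $(0,2]$ by compactness of the unit sphere. So fix $u,v$ with $N(u-v)=\epsilon$ small. Since $|N_i(u)-N_i(v)|$ is controlled by $N_i(u-v)$ and all gauges on $\mathbb{R}^n$ are mutually comparable, $N_i(u)$ and $N_i(v)$ agree up to $O(\epsilon)$, while $\max_i N_i(u-v)\ge N(u-v)=\epsilon$. Rescaling $u,v$ to $N_i$-unit vectors, peeling off the $O(\epsilon)$ discrepancy between $N_i(u)$ and $N_i(v)$, and invoking the $\beta$-uniform convexity of the ``winning'' norm $N_i$ yields $g_i\gtrsim\epsilon^{\beta}$ --- provided $u$ and $v$ are not nearly radially aligned with respect to $B_i$, i.e.\ provided $u/N_i(u)$ and $v/N_i(v)$ are not too close. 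The remaining configuration, in which $u,v$ are nearly radially aligned for the winning $B_i$, cannot also be nearly radially aligned for the other ball: that would force $v$ to be a near-multiple of $u$ with ratio close to $1$, hence $N(u-v)$ close to $0$, contradicting $N(u-v)=\epsilon$; so the same estimate applies with the roles of $N_1$ and $N_2$ exchanged.

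\textbf{The main obstacle} is this last step: converting the qualitative dichotomy ``$u,v$ cannot be nearly radial for both $B_1$ and $B_2$ without being close'' into an effective inequality, which requires combining the polynomial lower bounds in the moduli of convexity of $N_1$ and $N_2$ with the comparability constants between the two gauges and the ambient Euclidean norm, and checking that the peeling corrections do not overwhelm the main term (if one is careless here one recovers $\beta$-uniform convexity only for a possibly larger exponent $\beta'\in[2,\infty)$, which still suffices for the statement). Once this is in place, one obtains $\delta_{F_\Omega(x,\cdot)}(\epsilon)\gtrsim\epsilon^{\beta}$ uniformly in $\epsilon$, for every $x\in\Omega$, which is the content of Corollary~\ref{cor:mainfinsler}.
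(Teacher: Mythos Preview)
Your strategy is sound but substantially more elaborate than the paper's, and one preliminary step is unnecessary. The paper does not prove a general stability lemma of the form ``the average of two $\beta$-uniformly convex gauges is $\beta$-uniformly convex.'' Writing $M^\pm(v):=M_\Omega(x_0,\pm v)$ so that $F_\Omega=\tfrac12(M^++M^-)$, it simply records the finite-dimensional comparability $A^{-1}M^-\le M^+\le AM^-$, notes that $F(u-v)>\varepsilon$ forces (say) $M^+(u-v)>\varepsilon$ and hence $M^-(u-v)>\varepsilon/A$, applies the conclusion of Theorem~\ref{thm:mainminkowski} once to $M^+$ and once to $M^-$ to get $1-\tfrac12 M^\pm(u+v)\ge C_\pm\varepsilon^\beta$ (absorbing the $A$-loss into $C_-$), and averages these two inequalities to obtain $1-\tfrac12 F(u+v)\gtrsim\varepsilon^\beta$. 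No rescaling to the $N_i$-sphere and no radial-alignment dichotomy appear. Your detour through the reflected domain $\Omega'=2x-\Omega$ is also not needed: the substitution $w\mapsto -w$ in Definition~\ref{def:puc} shows $\delta_{M^-}\equiv\delta_{M^+}$ directly, so $M^-$ is $\beta$-uniformly convex with the \emph{same} modulus as $M^+$, without invoking divisibility of $\Omega'$ or passing to a larger exponent.

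That said, the normalization issue you take pains over is a genuine subtlety that the paper's short argument glosses past: the vectors $u,v$ lie on the $F$-unit sphere, so $M^\pm(u),M^\pm(v)$ need not be $\le 1$, and the inequality $1-\tfrac12 M^+(u+v)\ge C_+\varepsilon^\beta$ is not literally what the modulus-of-convexity bound from Theorem~\ref{thm:mainminkowski} supplies. Your sagitta identity $1-N(\tfrac{u+v}{2})=\tfrac12(g_1+g_2)$ with $g_i\ge 0$ is the right bookkeeping device for repairing this, and the dichotomy you sketch is a plausible route; in the present situation one also has the extra constraint $M^+(u)+M^-(u)=M^+(v)+M^-(v)=2$, which forces $M^+(u)-M^+(v)=-(M^-(u)-M^-(v))$ and pins each $M^\pm$-value into a fixed interval $[c,C]$, and this can be used to streamline the case analysis. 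Either way, what you gain over the paper is rigor at exactly the step the paper elides; what the paper's route buys is brevity.
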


The proof of Theorem \ref{thm:mainminkowski} is divided into three cases; one degenerate case, and two cases in which we use geometric constructions to bound the modulus of convexity of $\Omega$ from below. All three cases require a series of affine transformations that situate $\Omega$ in convenient coordinates for our computations, and these transformations are outlined in Section \ref{sec:setup}

This paper is organized as follows. In Section \ref{sec:background} we briefly introduce the notions of divisible convex sets, $\alpha$-H{\"o}lder regularity and $\beta$-convexity, and $p$-uniform convexity. In Section \ref{sec:setup} we describe the precise coordinate set-up required for the proof of theorem. In Section \ref{sec:proof} we prove Theorem \ref{thm:mainminkowski} and Corollary \ref{cor:mainfinsler}, and in Section \ref{sec:future} we discuss how our results may extend to the Hilbert metric on $\Omega$.

\subsection*{Acknowledgments} I would like to thank Wouter Van Limbeek for his continued guidance, patience, and support throughout this project. His expertise and engagement have made this work a delightful experience.

\section{Background}\label{sec:background}

\subsection{Divisible Convex Sets}

\begin{dfn}[see e.g.\cite{Benoist}]
    A \emph{divisible convex set} is a properly convex open subset $\Omega \subset \mathbb{RP}^n$ for which there exists a discrete group $\Gamma$ of projective transformations which acts cocompactly on $\Omega$.
\end{dfn}

Any such $\Omega$ will come equipped with the Hilbert metric $d_\Omega(x,y)$. To define $d_\Omega(x,y)$ for $x,y \in \Omega$, we consider the line $l$ through $x$ and $y$ and the points $l \cap \partial \Omega = \{a,b\}$. We then have the quadruple of points in the order $a,x,y,b$ and compute the quantity
\begin{equation}
d_\Omega(x,y) = \log \bigg( \frac{|ay| \cdot |xb|}{|ax| \cdot |by|} \bigg)
\end{equation} 

The Hilbert metric comes from a Finsler metric on $\Omega$, see \cite{Marquis}.

\begin{dfn}\label{def:finsler} The natural \textit{Finsler metric} on $\Omega$ is defined by
    \begin{equation}
            F_\Omega(x,v) = \frac{d}{dt} \biggr\rvert_{t = 0} d_\Omega (x, x+tv) = \frac{|v|}{2} \bigg( \frac{1}{|xp^-|} + \frac{1}{|xp^+|} \bigg)
\end{equation}

where $x \in \Omega$, $v \in T_x \Omega$, and $p^+, p^- \in \partial\Omega$ are the forward and backward boundary points in the directions of $v$ and $-v$.
\end{dfn}

For the proof of Theorem \ref{thm:mainminkowski}, we consider the Minkowski functional relative to $\Omega$.

\begin{dfn}\label{dfn:minkowski} The \textit{Minkowski functional} on $\Omega$ is defined by
\begin{equation}\label{equation:minfun}
    M_\Omega(x_0,v) := \inf \biggl\{a > 0: \frac{1}{a}v \in \Omega \biggr\}
\end{equation}
\end{dfn}

where $x_0 \in \interior \Omega$ and $v \in T_{x_0}\Omega$. The Minkowski functional $M_\Omega(x_0,v)$ can be thought of as the ``forward direction" of the Finsler metric on $\Omega$. Moreover, $F_\Omega(x,v)$ is the symmetrization of $M_\Omega(x_0, v)$.

\subsection{$\alpha$-H{\"o}lder regularity and $\beta$-convexity} \hfill\\

The $\alpha$-H{\"o}lder regularity regularity of $\partial \Omega$ is related to the Anosov geodesic flow of the Hilbert metric on $\Gamma \backslash \Omega$. In this section we recall the statement of this fact due to Benoist and define $\beta$-convexity.

\begin{thm}[Benoist \cite{Benoist}]\label{thm:anosov}
    Let $\Gamma$ be a torsion-free discrete subgroup which divides some strictly convex open set $\Omega$ in $\mathbb{S}^n$. Then the geodesic flow $\phi_t$ of the Hilbert metric on the quotient $\Gamma \backslash \Omega$ is Anosov.
\end{thm}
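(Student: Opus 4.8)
Here is the strategy (the result is due to Benoist \cite{Benoist}). The plan is to work in the universal cover, identify $T^1\Omega$ with a space of parametrized geodesics, build a $D\phi_t$-invariant splitting directly out of the boundary data, and then prove that it is uniformly hyperbolic, using cocompactness together with the rigidity forced by strict convexity. Since Hilbert geodesics are open projective segments, a vector $v\in T^1\Omega$ is the same datum as the ordered pair $(\xi^-,\xi^+)$ of endpoints in $\partial\Omega$ of the line it spans, together with the base point $x\in(\xi^-\xi^+)$; in these coordinates $\phi_t$ simply slides $x$ toward $\xi^+$ at unit $d_\Omega$-speed while fixing $(\xi^-,\xi^+)$, and everything descends to the compact manifold $T^1(\Gamma\backslash\Omega)$. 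Two structural facts about strictly convex divisible domains will be the engine of the argument, and I would establish (or invoke) them first: $(\Omega,d_\Omega)$ is Gromov hyperbolic, and $\partial\Omega$ is $C^1$ (in fact $C^{1+\alpha}$), the latter guaranteeing that $F_\Omega$, the flow $\phi_t$, and the distributions below have enough regularity for ``Anosov'' to be meaningful.

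For Gromov hyperbolicity I would argue by contradiction: if geodesic triangles had insize tending to infinity, translate the bad triangles back into a fixed compact fundamental domain by elements of $\Gamma$, pass to a limit using cocompactness, and observe that the limiting triangle can stay thick only if its three sides accumulate onto a common segment of $\partial\Omega$ --- impossible by strict convexity. The same compactness-plus-strict-convexity input also yields that $\partial\Omega$ contains no segment and is $C^1$ with H\"older tangents. Next I would manufacture the invariant foliations from boundary data alone: the weak-stable set of $(\xi^-,\xi^+,x)$ is $\{(\eta^-,\xi^+,y)\}$ (same forward endpoint), the weak-unstable set is $\{(\xi^-,\eta^+,y)\}$ (same backward endpoint), and the strong stable leaf is the slice of the weak-stable leaf lying on one horosphere, i.e.\ a level set of the Busemann function $b_{\xi^+}$; these are permuted by $\phi_t$ and by $\Gamma$, and their tangent distributions $E^s,E^u$ together with the flow line $\R\dot\phi$ split the tangent bundle of $T^1(\Gamma\backslash\Omega)$. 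Continuity of $E^s$ and $E^u$ is exactly where the $C^1$-regularity of $\partial\Omega$ enters, since then $v\mapsto(\xi^-(v),\xi^+(v))$ and the Busemann cocycle vary continuously with $v$.

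The hard part will be the uniform exponential contraction. I would take two unit-speed geodesic rays $\sigma_1,\sigma_2$ with common forward endpoint $\xi^+$ whose initial points lie on a single horosphere $\{b_{\xi^+}=0\}$, expand $d_\Omega(\sigma_1(t),\sigma_2(t))$ via the cross-ratio definition of the Hilbert metric, and use that the two backward endpoints are distinct points at which $\partial\Omega$ is $C^1$ to extract an exponential bound $d_\Omega(\sigma_1(t),\sigma_2(t))\le Ce^{-\lambda t}$ with $\lambda>0$ and $C$ depending only continuously on the pair of rays. Pushing this estimate down to the \emph{compact} space $T^1(\Gamma\backslash\Omega)$ replaces $C$ by a uniform constant, which is precisely uniform exponential contraction of $E^s$ under $\phi_t$; reversing time gives uniform expansion of $E^u$, and then $\phi_t$ satisfies the definition of an Anosov flow. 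The delicate point --- and the reason strict convexity cannot be dropped --- is that this uniformization genuinely needs the contraction rate not to degenerate along any orbit: a segment in $\partial\Omega$ would let two rays converging to an interior point of it stay a bounded distance apart forever, destroying the hyperbolicity of $E^s$. I therefore expect the bulk of the work to sit in making the cross-ratio estimate uniform, with the Gromov-hyperbolicity and boundary-regularity inputs supplying everything else.
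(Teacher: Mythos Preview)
The paper does not prove this theorem; it is quoted verbatim as a background result of Benoist \cite{Benoist} and used only to motivate the subsequent regularity statement about $\partial\Omega$. There is therefore no ``paper's own proof'' to compare your proposal against.

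That said, your outline is a faithful high-level summary of Benoist's actual argument: the identification of $T^1\Omega$ with $\partial\Omega\times\partial\Omega\setminus\Delta$ times a time coordinate, the construction of stable and unstable distributions from the forward and backward endpoints and horospherical slicing, the use of strict convexity plus cocompactness to rule out flat pieces in $\partial\Omega$ and to obtain $C^1$ regularity, and the extraction of uniform exponential contraction from cross-ratio estimates that are then made uniform by compactness of $T^1(\Gamma\backslash\Omega)$. The one place where your sketch is vaguer than Benoist's proof is the contraction estimate itself: Benoist does not pass through Gromov hyperbolicity as an intermediate step but works directly with the cross-ratio and the $C^{1+\alpha}$ regularity of $\partial\Omega$ (which he proves simultaneously), and the quantitative exponent $\lambda$ is tied to the H\"older exponent of the boundary rather than emerging from an abstract thin-triangles argument. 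Your route via Gromov hyperbolicity would still work, but it is a slight reordering of the logic rather than Benoist's own path.
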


As a consequence of Theorem \ref{thm:anosov}, we have the following statement about the regularity of $\partial \Omega$.

\begin{thm}[Benoist \cite{Benoist}]
    Let $\Omega$ be a divisible strictly convex open set in $\mathbb{S}^n$. Suppose that $\Omega$ is not an ellipsoid. Then there exists a maximal $\alpha \in (1,2)$ such that the boundary $\partial \Omega$ is $C^\alpha$.
\end{thm}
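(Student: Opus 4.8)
The plan is to reduce the theorem to a uniform, scale-invariant local estimate on $\partial\Omega$ and then extract the two relevant exponents --- a H\"older exponent $\alpha$ and a convexity exponent $\beta$ --- from the geodesic-flow dynamics supplied by Theorem~\ref{thm:anosov}. I would first record the preliminary fact that $\partial\Omega$ is already $C^1$: for a divisible strictly convex $\Omega$ the group $\Gamma$ is Gromov hyperbolic, the dual domain $\Omega^*$ is again divisible and strictly convex, and projective duality interchanges strict convexity of one boundary with $C^1$-smoothness of the other, so $\xi\mapsto T_\xi\partial\Omega$ is a well-defined continuous field of supporting hyperplanes. Fixing $\xi\in\partial\Omega$ and an affine chart in which $\xi$ is the origin and $T_\xi\partial\Omega$ is horizontal, $\partial\Omega$ is locally the graph of a convex $f_\xi\ge 0$ with $f_\xi(0)=0$ and $\nabla f_\xi(0)=0$, and the statement becomes: there are exponents $1<\alpha\le 2\le\beta<\infty$, independent of $\xi$, with $c\,|x'|^{\beta}\le f_\xi(x')\le C\,|x'|^{\alpha}$ for small $|x'|$, the value $\alpha$ being attained (so $\partial\Omega\in C^\alpha$) and satisfying $\alpha<2$ unless $\Omega$ is an ellipsoid. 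Uniformity in $\xi$ will be automatic once everything is phrased on the compact quotient $\Gamma\backslash\Omega$.

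The engine for the upper bound is the Anosov property. Lifting the geodesic flow $\phi_t$ to the homogeneous bundle $T^1\Omega$, whose points I view as triples $(\xi^-,\xi^+,s)$ with $\xi^\pm\in\partial\Omega$ the backward and forward endpoints, the strong stable and unstable leaves through a vector are carried by the endpoint map onto neighborhoods in $\partial\Omega$, and the position of such a leaf encodes the affine tangent hyperplane $T_\eta\partial\Omega$ as $\eta$ varies. Since $\phi_t$ is Anosov on the compact manifold $\Gamma\backslash\Omega$, its stable/unstable distributions and their strong foliations are $\theta$-H\"older for some $\theta>0$ by the classical regularity theory of Anosov flows; translating this through the cross-ratio formula for the Hilbert metric shows that $\eta\mapsto T_\eta\partial\Omega$ is H\"older with a definite positive exponent, i.e. $\partial\Omega$ is $C^{1,\theta'}$ and $f_\xi(x')\le C|x'|^{1+\theta'}$ uniformly. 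Applying the same argument to $\Omega^*$ and dualizing gives $f_\xi(x')\ge c|x'|^{\beta}$ with $\beta<\infty$. This already produces $\alpha\in(1,2]$ and $\beta\in[2,\infty)$.

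To see that the optimal $\alpha$ is attained and that $\alpha<2$, I would use the self-similarity built into divisibility. For a Hilbert geodesic ray $x_t\to\xi$, cocompactness yields $\gamma_n\in\Gamma$ keeping $\gamma_n^{-1}x_{t_n}$ in a fixed compact set, so the projective maps $\gamma_n^{-1}$ magnify a shrinking neighborhood of $\xi$ back to unit scale at a definite exponential rate; any sublimit of $\gamma_n^{-1}\Omega$ is a strictly convex ``blow-up'' domain, these blow-ups form a compact family invariant under a one-parameter rescaling, and hence the profile $f_\xi$ agrees, up to uniformly bounded multiplicative and horizontal-rescaling distortion, with a genuine power $|x'|^{\alpha(\xi)}$; continuity and $\Gamma$-invariance force $\alpha(\xi)$ to equal a constant $\alpha$, which is therefore maximal and attained. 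Finally $\alpha=2$ would force every blow-up of $\partial\Omega$ to be a paraboloid, equipping $\partial\Omega$ with a canonical, hence $\Gamma$-equivariant, field of osculating ellipsoids; this field descends to the compact quotient and trivializes the projective structure to the Beltrami--Klein model, so $\Omega$ would be an ellipsoid. (Equivalently, one invokes Crampon's rigidity, where $\alpha$ is controlled by the top Lyapunov exponent of $\phi_t$ and $\alpha=2$ characterizes the ellipsoid.)

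The main obstacle is exactly this last rigidity --- upgrading ``$\partial\Omega\in C^{1,1}$'' to ``$\Omega$ is an ellipsoid'' --- together with the claim that the optimal exponent is genuinely attained rather than a supremum. Both rely essentially on the homogeneity forced by the cocompact $\Gamma$-action, so that second-order flatness of $\partial\Omega$ cannot be a purely local accident, and the osculating-ellipsoid (equivalently Lyapunov-exponent) rigidity is the substantive geometric input rather than a formal consequence of the Anosov property. The remaining steps --- transferring H\"older regularity of the Anosov splitting to H\"older regularity of $\partial\Omega\subset\mathbb{RP}^n$ via cross-ratios, and the duality bookkeeping for $\beta$ --- are technical but routine.
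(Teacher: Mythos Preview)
The paper does not prove this statement at all: it is quoted as a background result attributed to Benoist (with citation \cite{Benoist}) and is used only to motivate the definition of $\beta$-convexity and to fix the exponent $\beta$ appearing in Theorem~\ref{thm:mainminkowski}. There is therefore no ``paper's own proof'' to compare your proposal against; the author simply imports the theorem as a black box.

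As for the proposal itself, it is a reasonable high-level sketch of how Benoist's argument actually proceeds: one does obtain $C^{1+\theta}$ from the H\"older regularity of the Anosov splitting, duality does interchange the H\"older and convexity exponents, and the cocompact $\Gamma$-action does furnish the self-similarity needed to show the optimal exponent is attained. Your own diagnosis of the weak point is accurate: the rigidity step ``$\alpha=2\Rightarrow\Omega$ is an ellipsoid'' is the substantive part, and your outline there (osculating ellipsoids/Lyapunov rigidity) is more of a plausibility argument than a proof. But since this paper neither proves nor needs a proof of the theorem, the appropriate fix here is simply to cite Benoist rather than to attempt an independent argument.
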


We now define $\beta$-convexity.

\begin{dfn}[Guichard \cite{Guichard}]\label{dfn:beta} Let $M$ be a hypersurface of class $C^1$ in $\mathbb{R}^n$. $M$ is called \emph{$\beta$-convex} for some real number $\beta \geq 2$  if, for all compact $K$ in $M$, there exists a strictly positive constant $C_K$ such that 
\begin{equation}\label{dfn:betaconvexity}
    d(x,T_yM) \geq C_Kd(x,y)^\beta \text{ for all } x, y \in K.
\end{equation}
\end{dfn}

\begin{rems} \hfill\\
\begin{enumerate}
    \item Guichard remarks in Section 3.2 of \cite{Guichard} that Definition \ref{dfn:beta} applies to the boundary $\partial \Omega$, and is equivalent to $\beta$-convexity of $f$ if $M$ is the graph of a $C^1$ function $f$.
    \item The inequality \ref{dfn:betaconvexity} implies that 
    \begin{equation}\label{dfn:betafunction}
    d(x,T_yM) \geq C_Kt^\beta,
    \end{equation}
    where $t=\|\text{proj}_{T_yM}x\|$.
    \item The constants $\alpha$ and $\beta$ satisfy the relation
    \begin{equation}
        \frac{1}{\alpha} + \frac{1}{\beta} = 1
    \end{equation}
    with $\alpha = \beta = 2$ only when $\Omega$ is an ellipse \cite{Guichard}. In the terminology of \cite{BCL}, $\alpha$ and $\beta$ are \textit{dual indices}
\end{enumerate}

\end{rems}

\subsection{p-Uniform Convexity}\hfill\\

We now define $p$-uniform convexity in the context of normed spaces. Later, this will apply to the asymmetric Minkowski functional $M_\Omega(x_0,\cdot)$.

\begin{dfn}[Suzuki \cite{Suzuki}]\label{def:puc}
    For $p \in [2, \infty)$, a Banach space $(X, \|\cdot\|)$ is \textit{$p$-uniformly convex} if there exists $C > 0$ satisfying 
    \begin{equation*}
        \delta(\varepsilon) \geq C\varepsilon^p
    \end{equation*}
    for all $\varepsilon \in [0,2]$ where $\delta(\varepsilon)$ is the \textit{modulus of convexity} defined by the infimum
    \begin{equation*}
        \delta(\varepsilon) = \inf \bigg(1-\frac{\|x+y\|}{2}\bigg)
    \end{equation*}
    for $\varepsilon \in [0,2],$ where the infimum is taken over all $x,y \in X$ with $\|x\|, \|y\| \leq 1$, and $\|x-y\| \geq \varepsilon$. 
\end{dfn}

Geometrically, $\delta(\varepsilon)$ can be thought of as measuring the rate at which the midpoint of a chord of length $\varepsilon$ in the unit ball of $X$ approaches the boundary of the ball as $\varepsilon$ decreases.

\begin{rems}\leavevmode
\begin{enumerate}
    \item The notion of $p$-uniform convexity comes from the study of Banach spaces (see \cite{BCL}), and our definition above is due to Suzuki \cite{Suzuki}.
    \item We note that $\delta(\varepsilon)$ is increasing with $\varepsilon$, and that it is sufficient to consider $x, y \in X$ with $\|x\|, \|y\| = 1$.
    \item If $X$ is a Hilbert space, then $\delta(\varepsilon) = 1-\sqrt{1-\frac{\varepsilon^2}{4}}$, so $X$ is 2-uniformly convex.
\end{enumerate}
\end{rems}

\section{Coordinate Set-Up}\label{sec:setup}

Two cases in the proof of the main theorem require linear transformations in order to put $\Omega$ in convenient coordinates. In this section we describe these transformations and prove that the amount they distort $\Omega$ is bounded.

Let $\Omega$ be strictly convex, $x_0 \in \Omega$, and $\xi \in \partial \Omega$. We define

\begin{equation*}
    T: \R^n \longrightarrow \R^n
\end{equation*}

as the composition

\begin{equation*}
    T= T_3 \circ T_2 \circ T_1
\end{equation*}

where

\begin{equation*}
T_1: \R^n  \longrightarrow \R^n
\end{equation*}

rotates $\Omega$ so that $T_\xi \Omega$ is parallel to $\{x_2=\dots=x_n=0\}$,

\begin{equation*}
T_2: \R^n  \longrightarrow \R^n
\end{equation*}

simultaneously translates $x_0$ to the first-coordinate axis and rotates $\xi$ into the plane given by the first two coordinates, and

\begin{equation*}
T_3: \R^n  \longrightarrow \R^n 
\end{equation*}

is the shear transformation taking $\xi$ to the second coordinate axis, defined by

\begin{equation}
A = \begin{bmatrix} 
1      & -\cot(\psi(\xi))      & 0\dots0 \\
0      & 1      &         \\
\vdots & \vdots & I_{n-2} \\
0      & 0      & 
\end{bmatrix}
\end{equation}

where the function 
\begin{equation}
    \psi: \partial \Omega \longrightarrow (0, \pi)
    \end{equation}
is defined as the composition
\begin{equation*}
    \psi = \psi_2 \circ \psi_1
    \end{equation*}
    where
    \begin{align*}
        \psi_1: \partial \Omega & \longrightarrow \mathbb{S}^n \times \mathbb{S}^n\\
        \xi &\longmapsto (v(\xi),w(\xi))
    \end{align*}
    and
    \begin{align*}
        \psi_2: \mathbb{S}^n \times \mathbb{S}^n & \longrightarrow (0,\pi)\\
        (v(\xi),w(\xi)) &\longmapsto \angle_\xi (v(\xi),w(\xi))
    \end{align*}

In the above definition, $v(\xi)$ is the unit vector based at $\xi \in \partial\Omega$ in the direction of the basepoint $x_0 \in \Omega$ and $w(\xi)$ is tangent to $\Omega$ at $\xi \in \partial\Omega$.

\begin{claim}
    For fixed basepoint $x_0 \in \Omega$, the shear transformation defined by $A$ is bounded independently of $\xi$.
\end{claim}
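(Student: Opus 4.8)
The claim reduces to showing that the entry $\cot(\psi(\xi))$ in the shear matrix $A$ is bounded as $\xi$ ranges over $\partial\Omega$, since the remaining entries of $A$ are fixed. Equivalently, we must bound $\psi(\xi)=\angle_\xi(v(\xi),w(\xi))$ away from both $0$ and $\pi$. The plan is to exploit two facts: that $\Gamma$ acts cocompactly on $\Omega$, and that $\Omega$ is strictly convex with $C^1$ boundary.

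First I would observe that $\psi$ is continuous on $\partial\Omega$. Indeed, $v(\xi)$ is the unit vector at $\xi$ pointing toward the fixed basepoint $x_0$, which varies continuously with $\xi$ because $x_0\in\operatorname{int}\Omega$; and $w(\xi)$ is a unit tangent direction to $\partial\Omega$ at $\xi$, which varies continuously precisely because $\partial\Omega$ is $C^1$ (this is where strict convexity and the Benoist regularity results enter — $\partial\Omega$ is at least $C^1$, indeed $C^\alpha$). Hence $\psi=\psi_2\circ\psi_1$ is a composition of continuous maps, so it is continuous.

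Next, since $\partial\Omega$ is compact in $\mathbb{RP}^n$, the continuous function $\psi$ attains its infimum and supremum. It therefore suffices to show $\psi(\xi)\in(0,\pi)$ for every $\xi$, i.e. that $v(\xi)$ and $w(\xi)$ are never parallel or anti-parallel. If $v(\xi)$ were parallel to $w(\xi)$, then the tangent line $T_\xi\partial\Omega$ would contain the segment from $\xi$ toward $x_0$, and in particular would meet the open set $\Omega$; but by convexity $\Omega$ lies strictly on one side of each supporting hyperplane $T_\xi\partial\Omega$, and strict convexity guarantees $T_\xi\partial\Omega\cap\overline\Omega=\{\xi\}$, so $T_\xi\partial\Omega$ cannot meet $\Omega$. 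This contradiction shows $\psi(\xi)\neq 0$ and $\psi(\xi)\neq\pi$ for all $\xi$. Combining with compactness, there exist $0<\psi_{\min}\le\psi(\xi)\le\psi_{\max}<\pi$ uniformly in $\xi$, whence $|\cot(\psi(\xi))|\le\max(|\cot\psi_{\min}|,|\cot\psi_{\max}|)$, a bound depending only on $\Omega$ and $x_0$. This proves the claim.

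The main obstacle is the continuity of $w(\xi)$, i.e. justifying that the tangent direction to $\partial\Omega$ depends continuously on the boundary point; this is exactly the content of $\partial\Omega$ being $C^1$, which follows from strict convexity together with Benoist's theorem, so no new argument is needed beyond citing it. (One subtlety worth a remark: since $w(\xi)$ is only defined up to sign, one should phrase the argument in terms of the unoriented angle $\angle_\xi(v(\xi),w(\xi))\in(0,\pi)$, or equivalently in terms of $|\cos\psi(\xi)|$ bounded away from $1$, which is invariant under the sign ambiguity; alternatively, since the construction only uses $\cot\psi$ up to the choice that keeps $\xi$ on the correct coordinate axis, either sign gives a bounded shear.)
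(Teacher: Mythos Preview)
Your proposal is correct and follows essentially the same approach as the paper: establish continuity of $\psi$ via the $C^1$ regularity of $\partial\Omega$, invoke compactness of $\partial\Omega$ to attain extrema, and use strict convexity to rule out $\psi(\xi)\in\{0,\pi\}$. Your write-up is in fact more careful than the paper's (e.g.\ the remark on the sign ambiguity of $w(\xi)$), and the mention of the cocompact $\Gamma$-action is superfluous since neither you nor the paper actually use it here.
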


\begin{proof}
    The angle function $\psi$ is continuous on a closed set and thus has a maximum and a minimum. Indeed, $\psi_1$ is continuous since $\partial \Omega$ is $C^1$, and thus $\psi = \psi_2 \circ \psi_1$ is also continuous.
    
    Moreover, $\psi(\xi)$ is bounded away from 0 and $\pi$. We recall that since $\Omega$ is strictly convex, it must lie completely on one side of its tangent line at $\xi \in \partial \Omega$. Then, since $x_0 \in \interior \Omega$ and the vectors $v(\xi)$ and $w(\xi)$ meet at $\xi \in \partial \Omega$, the lines spanned by $v(\xi)$ and $w(\xi)$ cannot be parallel.
\end{proof}

\section{Proof of the main theorem}\label{sec:proof}

\begin{proof}[Proof of Theorem \ref{thm:mainminkowski}] 
Let $\varepsilon >0$, and consider $x,y \in \Omega$ with $M_\Omega(x-y) > \varepsilon$. 

\begin{case}[$\frac{x+y}{2}=x_0$]
If $\frac{x+y}{2}=x_0$, then 
\begin{equation*}
    1 - \frac{M_\Omega(x+y)}{2} = 1
\end{equation*}

Then choose $C_0 = 2^{-\beta}$ so that
\begin{equation}
    C_0\varepsilon^\beta = \big(\frac{\varepsilon}{2}\big)^\beta \leq 1
\end{equation}
as desired.
\end{case}

When $\frac{x+y}{2} \neq x_0$, we define $\xi \in \partial \Omega$ as the boundary point in the direction of $\frac{x+y}{2}$ as seen from $x_0$. Using $\xi \in \partial \Omega$, we situate $\Omega$ in coordinates as specified in Section \ref{sec:setup}.  We define $\theta$ as the angle between $x-y$ and $T_\xi\Omega$, and consider the cases where $\theta < \frac{\pi}{4}$ and $\theta \geq \frac{\pi}{4}$ separately.

\begin{case}[$\theta < \frac{\pi}{4}$]

    Since $\partial \Omega$ is $\beta$-convex, we have that $\partial \Omega$ is bounded below by $f(t) = C_K|t|^\beta - h$ for some constants $C_K$ and $h$, with both curves meeting at $\xi = (0, -h)$. We denote by $\delta_\Omega (\varepsilon)$ the modulus of convexity of $\Omega$, and define $z$ and $w$ as the vertical projections onto $f(t)$ of $x$ and $y$, respectively, as shown in Figure \ref{fig:smalltheta}. Since $\frac{M_\Omega(x+y)}{2} < \frac{M_f(z+w)}{2}$, we proceed by computing $\frac{M_f(z+w)}{2}$ in order to bound $\delta_\Omega(\varepsilon)$ from below.

    \begin{figure}[h]
    \centering
    \includegraphics[width=0.9\textwidth]{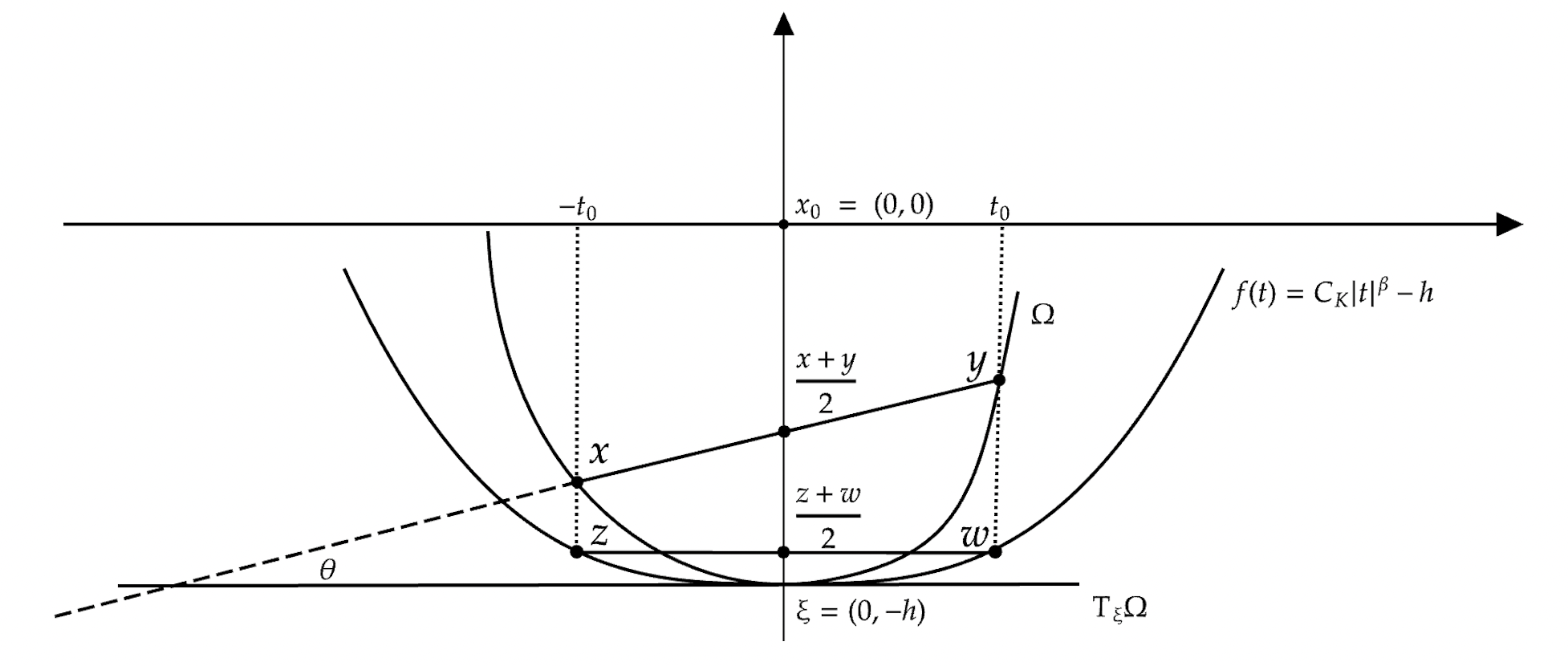}
    \caption{Case 2}
    \label{fig:smalltheta}
\end{figure}

    By definition (see \ref{dfn:minkowski})
    \begin{equation*}
    \frac{M_f(z+w)}{2} = \frac{1}{a}    
    \end{equation*}
    
     where $a > 0$ is the scaling factor such that $a(C_K|t_0|^\beta - h) = -h$. Then
    \begin{equation*}
        1 - \frac{M_f(z+w)}{2} = 1 - \frac{1}{a} = 1 - \frac{C_K|t_0|^\beta -h}{-h} = \frac{C_K|t_0|^\beta}{h}
    \end{equation*}

    Next, we use equivalence of norms to put $\frac{C_K|t_0|^\beta}{h}$ in terms of $\varepsilon$, where $\| \cdot \|$ denotes the Euclidean norm. There is a $c_1 >0$ such that

    \begin{align*}
        c_1M_\Omega(x-y) & \leq \|x-y\| \\
        c_1\varepsilon & \leq \|x-y\| \\
        c_1\varepsilon\cos\theta & \leq \|x-y\|\cos\theta \\
        c_1\varepsilon\cos\theta & \leq 2t_0
    \end{align*}
And thus 
\begin{equation}
    \delta_\Omega (\varepsilon) > \frac{C_K|t_0|^\beta}{h} \geq \frac{C_K(c_1\varepsilon\frac{\sqrt{2}}{2})^\beta}{h} = C_1\varepsilon^\beta.
\end{equation}
    
\end{case}

\begin{case}[$\theta \geq \frac{\pi}{4}$]
    We put the piecewise function 
    \begin{equation}
        f(t) = 
\begin{cases}
-h & \text{if } t \leq 0\\
mt-h  & \text{if } t > 0,
\end{cases}
 \end{equation} within $\Omega$ as shown in figure \ref{fig:largetheta}.

    \begin{figure}[h]
    \centering
    \includegraphics[width=0.9\textwidth]{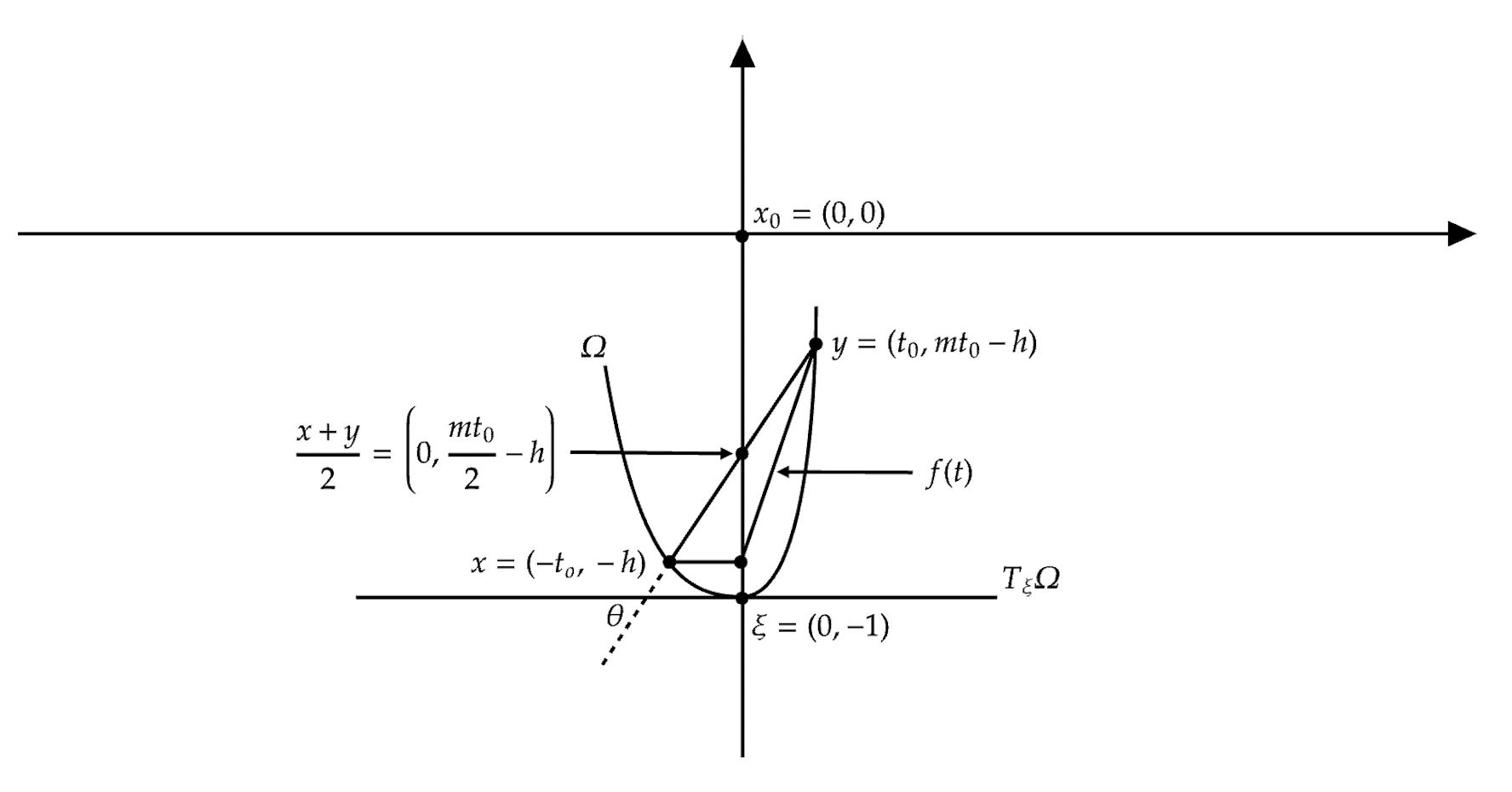}
    \caption{Case 3}
    \label{fig:largetheta}
\end{figure}

We compute $\frac{M_f(x+y)}{2}$ to bound $\delta_\Omega (\varepsilon)$ from below. Here we consider $a > 0$ the scaling factor such that $a(\frac{mt_0}{2} -h) = -h$. Then

\begin{equation}
    1 - \frac{M_f(x+y)}{2} = 1 - \frac{1}{a} = 1 - \frac{\frac{mt_0}{2}-h}{-h} = \frac{mt_0}{2h}
\end{equation}

Comparing norms, there is $c_2 > 0$ such that

    \begin{align*}
        c_2M_\Omega(x-y) & \leq \|x-y\| \\
        c_2\varepsilon & \leq t_0\sqrt{4+m^2} \\
        \frac{c_2\varepsilon}{\sqrt{4+m^2}} & \leq t_0
    \end{align*}

So

\begin{equation*}
    \delta_\Omega (\varepsilon) > \frac{mt_0}{2h} \geq \frac{mc_2\varepsilon}{2h\sqrt{4+m^2}} \geq \frac{c_2\varepsilon}{h\sqrt{8}} >  \frac{c_2\varepsilon^\beta}{h\sqrt{8}} = C_2\varepsilon^\beta
\end{equation*}

where the third inequality is due to the slope $m$ having a minimum value of 2. This completes Case 3.
\end{case}

Taking $C:= \min \{C_0, C_1, C_2 \}$, we have

\begin{equation}\label{eqn:betauc}
    \delta_\Omega(\varepsilon) \geq C\varepsilon^\beta
\end{equation}
for all $\varepsilon \in [0,2]$.
\end{proof}
We now use the result in \ref{eqn:betauc} to show that Theorem \ref{thm:mainminkowski} holds for the Finsler metric on $\Omega$.

\begin{proof}[Proof of Corollary \ref{cor:mainfinsler}]
We use the notation
\begin{equation*}
    M^{\pm}(v) := M(\pm v)
\end{equation*}
throughout. There exists an $A > 0$ such that
\begin{equation}
    \frac{1}{A}M^- \leq M^+ \leq AM^-.
\end{equation}

Take $x,y \in \Omega$ such that $F(x-y) > \varepsilon$. Since

\begin{equation*}
    F_\Omega(x,v) = \frac{M^+_\Omega(x,v) + M^-_\Omega(x,v)}{2},
\end{equation*}
without loss of generality we have $M^+_\Omega(x-y) > \varepsilon$, and so $M^-_\Omega(x-y) > \frac{\varepsilon}{A}$. Then by \ref{eqn:betauc},
\begin{equation}
    1 - \frac{M^+_\Omega(x+y)}{2} \geq C_+\varepsilon^\beta
\end{equation}
and
\begin{equation}
    1 - \frac{M^-_\Omega(x+y)}{2} \geq C_-\bigg(\frac{\varepsilon}{A}\bigg)^\beta.
\end{equation}
Then
\begin{equation}
    1-\frac{F_\Omega(x+y)}{2} = 1-\frac{\frac{M^+_\Omega(x+y) + M^-_\Omega(x+y)}{2}}{2} > \Bigg(\frac{C_+ + \frac{C_-}{A^\beta}}{2}\Bigg)\cdot \varepsilon^\beta > 0
\end{equation}
\end{proof}
\section{$\beta$-Uniform Convexity of the Hilbert metric?}\label{sec:future}

For narrative resolution it would be satisfying to prove that the Hilbert metric $d_\Omega(x,y)$ on a divisible convex set has $\beta$-uniformly convex behavior. Toward this end, Ohta's generalization of 2-uniform convexity for a nonlinear metric space $(X, d)$ may be useful.

\begin{dfn}[Ohta \cite{Ohta}]
    Let $(X,d)$ be a nonlinear metric space. Then $(X,d)$ is \textit{2-uniformly convex} if for any $x \in X$ and any minimal geodesic $\eta: [0,1] \longrightarrow X$, we have
\begin{equation}\label{eq:2uc}
    d\bigg(x,\eta\bigg(\frac{1}{2}\bigg)\bigg)^2 \leq \frac{1}{2}d(x,\eta(0))^2 + \frac{1}{2}d(x,\eta(1))^2 - \frac{1}{4C^2}d(\eta(0),\eta(1))^2
\end{equation}
\end{dfn}

\begin{rem}
    When $C=1$, Equation \ref{eq:2uc} corresponds to the CAT(0) property.
\end{rem}

We then have the following question.

\begin{question}\label{question}
    Let $\Omega$ be a strictly convex divisible domain with the Hilbert metric $d_\Omega$. Is $(\Omega,d_\Omega)$ $\beta$-uniformly convex? 
\end{question}

To answer Question \ref{question}, it would likely be necessary to prove that $F_\Omega(x_0,\cdot)$ is \textit{uniformly} $\beta$-uniformly convex, i.e. that $\delta_\Omega(\varepsilon) \geq C\varepsilon^\beta$ with $C$ independent of the choice of basepoint $x_0 \in \Omega$.
\bibliographystyle{alpha}
\bibliography{ref}

\end{document}